\numberwithin{equation}{section}
\newtheorem{thm}{Theorem}[section]
\newtheorem{lem}[thm]{Lemma}
\newtheorem{cor}[thm]{Corollary}
\numberwithin{equation}{section}
\theoremstyle{remark}
\newtheorem{rem}[thm]{Remark}
\DeclareMathOperator{\diam}{diam}
\DeclareMathOperator{\dist}{dist}
\renewcommand{\S}{\mathbb{S}}
\newcommand{\D}{\mathbb{D}}
\newcommand{\R}{\mathbb{R}}
\newcommand{\Z}{\mathbb{Z}}
\renewcommand{\d}{{\mathrm d}}
\def\az{\alpha}
\def\dist{{\mathop\mathrm{\,dist\,}}}
\def\loc{{\mathop\mathrm{\,loc\,}}}
\def\ls{\lesssim}
\def\gs{\gtrsim}
\def\bint{{\ifinner\rlap{\bf\kern.35em--}
\int\else\rlap{\bf\kern.45em--}\int\fi}\ignorespaces}
\def\bbint{{\ifinner\rlap{\bf\kern.35em--}
\hspace{0.078cm}\int\else\rlap{\bf\kern.45em--}\int\fi}\ignorespaces}
\def\diam{{\mathop\mathrm{\,diam\,}}}
\def\bint{{\ifinner\rlap{\bf\kern.35em--}
\int\else\rlap{\bf\kern.45em--}\int\fi}\ignorespaces}
\begin{document}

\title[Duality of capacities and Sobolev extendability in the plane]
{Duality of capacities and Sobolev extendability in the plane}

\author{Yi Ru-Ya Zhang}

\address{ETH Z\"urich, Department of Mathematics, R\"amistrasse 101, 8092, Z\"urich, Switzerland}
\email{yizhang3@ethz.ch}
\thanks{{\bf Dedicated to the 60th birthday of Pekka Koskela}}
 \thanks{The author was partially funded by the European Research Council under the Grant Agreement No. 721675 ``Regularity and Stability in Partial Differential Equations (RSPDE)''. The author states that there is no conflict of interest.}

\subjclass[2010]{	30C85, 46E35.}
\keywords{capacity, Sobolev extension domain}
\date{\today}


\begin{abstract}
We reveal relations between the duality of capacities and the duality between Sobolev extendability of Jordan domains in the plane, and explain how to read the curve conditions involvd in the Sobolev extendability of Jordan domains via the duality of capacities. Finally  as an application, we give an alternative proof of the necessary condition for a Jordan planar domain to be $W^{1,\,q}$-extension domain when $2<q<\infty$. 
\end{abstract}


\maketitle

\section{Introduction}
Let $\Omega$ be a Jordan domain in the plane, that is, $\Omega$ is the bounded connected component of $\mathbb R^2 \setminus \gamma_0$ for  a Jordan curve $\gamma_0$. We say that $\Omega$ is a $W^{1,\,p}$-extension domain if it admits an extension 
operator $E \colon W^{1,p}(\Omega) \to W^{1,p}(\mathbb R^2)$ such that,  there exists a constant $C \ge 1$ so that for every 
$u \in W^{1,p}(\Omega)$ we have 
\[
||Eu||_{W^{1,p}(\R^2)} \le C||u||_{W^{1,p}(\Omega)} 
\]
and $Eu|_\Omega = u$.
Here we define the Sobolev space $W^{1,p}(\Omega)$, $1 \le p \le \infty,$ as
\[
 W^{1,p}(\Omega) = \left\{u \in L^1_{\loc}(\Omega) ~:~\nabla u \in 
L^p(\Omega,\R^2)\right\},
\]
where $\nabla u$ denotes the distributional gradient of $u$. The (semi)-norm in
$W^{1,p}(\Omega)$ that we consider here is 
$$||u||_{W^{1,p}(\Omega)} = ||\nabla u||_{L^p(\Omega)}. $$

The following result was proven in \cite{KRZ2015}.

\begin{thm}\label{sobolev ext}
Let $\Omega\subset \mathbb C$ be a Jordan domain. Then it is a $W^{1,\,p}$-extension domain if and only if $\tilde\Omega:=\mathbb C\setminus \overline \Omega$ is a $W^{1,\,q}$-extension domain for $1<p<\infty$ and $q=\frac p {p-1}$. 
\end{thm}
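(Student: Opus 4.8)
The plan is to reduce the extendability of a Jordan domain to an intrinsic geometric condition on the common boundary curve $\gamma_0=\partial\Omega=\partial\tilde\Omega$, and then to recognize that condition as self-conjugate under the exchange $(\Omega,p)\leftrightarrow(\tilde\Omega,q)$ by means of the planar duality of capacities. A first simplification: since $\mathbb C\setminus\overline{\tilde\Omega}=\Omega$ and the conjugate exponent of $q=\frac{p}{p-1}$ is again $p$, it suffices to prove a single implication, say that $W^{1,p}$-extendability of $\Omega$ forces $W^{1,q}$-extendability of $\tilde\Omega$, for every Jordan domain and every $1<p<\infty$; applying this to $\tilde\Omega$ with exponent $q$ then yields the converse.

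First I would recall the geometric characterization of $W^{1,p}$-extendability for Jordan domains: $\Omega$ is a $W^{1,p}$-extension domain if and only if $\gamma_0$ satisfies a quantitative curve/capacity condition, controlling, uniformly over all boundary arcs, the $p$-capacity (equivalently the $p$-modulus of a separating or connecting curve family) of the condensers that straddle $\gamma_0$. This is the form in which extendability is detected: the extension operator exists precisely when these condensers cannot degenerate faster than the scale permits.

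The central step is the planar duality of moduli. Any curve family $\Gamma$ connecting two boundary arcs through $\Omega$ has a conjugate family $\Gamma^{*}$ of curves separating the same arcs through $\tilde\Omega$, and in the plane these satisfy the reciprocity relation linking $\mathrm{Mod}_p(\Gamma)$ to $\mathrm{Mod}_q(\Gamma^{*})$ with $\tfrac1p+\tfrac1q=1$. Under this correspondence the condenser read from the $\Omega$-side at exponent $p$ is exactly the conjugate condenser read from the $\tilde\Omega$-side at exponent $q$. I would use this to translate the curve condition certifying $W^{1,p}$-extendability of $\Omega$ into precisely the curve condition certifying $W^{1,q}$-extendability of $\tilde\Omega$, with comparable constants. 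Two technical reductions accompany this. The complement $\tilde\Omega$ is unbounded, so I would either pass to the sphere $\widehat{\mathbb C}$, where $\tilde\Omega\cup\{\infty\}$ is again a Jordan domain, or apply an inversion fixing a boundary point, using that a M\"obius change of variables distorts the $W^{1,p}$-norm and the relevant capacities only by bounded factors away from $\infty$; and I would localize the capacity condition to dyadic scales along $\gamma_0$, so that the modulus duality is applied on bounded quadrilaterals where it is cleanest, and then sum. Finally I must track the extension constant $C$ through the two translations to keep the equivalence quantitative.

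The main obstacle is the rigorous implementation of the modulus-duality step at a wild Jordan boundary: identifying the conjugate curve families uniformly and with controlled constants when $\gamma_0$ is merely a topological circle, and ensuring that the reciprocity $\mathrm{Mod}_p\leftrightarrow\mathrm{Mod}_q$, which is transparent for smooth quadrilaterals, survives passage to the genuine condensers appearing in the extendability criterion. This is delicate precisely in the regimes near $p=2$, where the geometric roles of $\Omega$ and $\tilde\Omega$ interchange.
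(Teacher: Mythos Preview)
Your high-level strategy---reduce extendability to a capacity criterion on $\gamma_0$ and transfer it via planar $p$--$q$ duality---is the one the paper sketches, and your symmetry reduction to a single implication is correct. But two of your steps are misconceived.

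First, the duality you invoke is not the one that does the work. Theorem~\ref{pq duality} is \emph{internal} to a single Jordan quadrilateral: for $\partial\Omega=\gamma_1\cup\gamma_2\cup\gamma_3\cup\gamma_4$ one has ${\rm Cap}_p(\gamma_1,\gamma_3;\Omega)^{1/p}\,{\rm Cap}_q(\gamma_2,\gamma_4;\Omega)^{1/q}=1$. There is no ``$\mathrm{Mod}_p$ in $\Omega$ versus $\mathrm{Mod}_q$ in $\tilde\Omega$'' reciprocity of the kind you describe, and your ``conjugate family $\Gamma^*$ of separating curves through $\tilde\Omega$'' is not a standard object. In the paper's argument the bridge between the two sides is the \emph{extension hypothesis itself}: if $\Omega$ is a $W^{1,q}$-extension domain then ${\rm Cap}_q(\gamma_2,\gamma_4;\tilde\Omega)\lesssim{\rm Cap}_q(\gamma_2,\gamma_4;\Omega)$, and one then applies Theorem~\ref{pq duality} \emph{separately} inside $\Omega$ and inside $\tilde\Omega$ (the latter after the bounded/unbounded swap of \cite[Lemma~2.1]{KRZ2015}) to flip this into ${\rm Cap}_p(\gamma_1,\gamma_3;\Omega)\lesssim{\rm Cap}_p(\gamma_1,\gamma_3;\tilde\Omega)$.

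Second, and more seriously, you treat ``the geometric characterization of $W^{1,p}$-extendability for Jordan domains'' as a black box to be recalled, but for $1<p<2$ that characterization is not available in advance---it is established in the very paper \cite{KRZ2015} whose theorem you are trying to prove. What the duality argument above actually delivers is only that every \emph{$p$-capacity function} on $\tilde\Omega$ (continuous, equal to $0$ and $1$ on two disjoint boundary subarcs) admits a controlled $W^{1,p}$-extension. Upgrading this to extendability of \emph{all} functions in $W^{1,p}(\tilde\Omega)$ is the genuine difficulty; as the paper explains, in \cite{KRZ2015} this is done indirectly, by showing that capacity-function extendability forces the subhyperbolic curve condition~\eqref{curve condition} on the complementary domain, and then constructing an extension operator from that curve condition. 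Your proposal does not touch this step. The obstacle you single out---making the modulus duality rigorous across a wild Jordan boundary---is in fact not an obstacle at all: Theorem~\ref{pq duality} already holds for arbitrary Jordan domains.
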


This duality of extendability is indeed hinted by the following duality of capacities in the plane, which originally comes from \cite{Z1967} and  can be applied to show, for instance, Uniformization theorem \cite{R2014}. Recall that for a given pair of continua $E,\,F \subset \overline \Omega\subset  { \mathbb R^2}$ and $1< p<\infty$, one defines the {\it $p$-capacity between $E$ and $F$ in $\Omega$} as
 $${\rm Cap}_p(E,\,F; \Omega)=\inf\{\|\nabla u\|^p_{ L^{p} (\Omega)}: \ u\in\Delta(E,\,F; \Omega)\},$$
 where $\Delta(E,\,F; \Omega)$ denotes the class of all $u\in W^{1,\,p}(\Omega)$ that are continuous
in $\Omega\cup E\cup F$ and satisfy $u=1$ on $E$, and $u=0$ on $F$. 
Observe that by definition  the $p$-capacity is increasing with respect to
$E,\,F,\,\Omega$. 

\begin{thm}[\cite{AM1999, R2008}]\label{pq duality}
Let $\Omega\subset \hat{\mathbb C}$ be a Jordan domain enclosed by four arcs $\gamma_1$, $\gamma_2$, $\gamma_3$ and $\gamma_4$ counterclockwise. Then we have 
\begin{equation}\label{dual capa}
\left({\rm Cap}_p(\gamma_1,\,\gamma_3;\,\Omega)\right)^{\frac 1 p}\left({\rm Cap}_q(\gamma_2,\,\gamma_4;\,\Omega)\right)^{\frac 1 q}= 1
\end{equation}
for $1<p<\infty$ and $q=\frac p {p-1}$. 
\end{thm}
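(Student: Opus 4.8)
The plan is to exploit the classical fact that in a simply connected planar domain a $p$-harmonic function admits a $q$-harmonic conjugate, and to show that the two extremal functions for the two capacities form exactly such a conjugate pair.

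First I would settle the variational side. By the direct method in the calculus of variations together with the strict convexity of $\xi\mapsto|\xi|^p$, there is a unique minimizer $u\in\Delta(\gamma_1,\gamma_3;\Omega)$ realizing ${\rm Cap}_p(\gamma_1,\gamma_3;\Omega)=:M$, and $u$ is $p$-harmonic in $\Omega$, i.e. $\text{div}\,\sigma=0$ for $\sigma:=|\nabla u|^{p-2}\nabla u$, subject to $u=1$ on $\gamma_1$, $u=0$ on $\gamma_3$ and the natural (Neumann) condition $\sigma\cdot n=0$ on the free arcs $\gamma_2,\gamma_4$. Testing the equation against $u$ and integrating by parts gives the energy identity $M=\int_\Omega|\nabla u|^p=\int_{\gamma_1}\sigma\cdot n\,ds$, so that $M$ is the flux of $\sigma$ through $\gamma_1$.

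Next I would construct the conjugate. Since $\Omega$ is a Jordan domain, hence simply connected, and $\sigma$ is divergence-free, there is a single-valued stream function $v$ with $\nabla v=J\sigma$, where $J$ denotes rotation by $\pi/2$. Three computations then drive the proof: (i) $|\nabla v|=|\nabla u|^{p-1}$, so $\int_\Omega|\nabla v|^q=\int_\Omega|\nabla u|^{(p-1)q}=\int_\Omega|\nabla u|^p=M$; (ii) because $u$ is constant on $\gamma_1$ and $\gamma_3$, $\nabla u$ and hence $\sigma$ is normal there, so $\nabla v=J\sigma$ is tangential, giving $\partial_n v=0$ on $\gamma_1,\gamma_3$, while on $\gamma_2,\gamma_4$ the vanishing of $\sigma\cdot n$ forces $v$ to be constant on each arc, with $v|_{\gamma_4}-v|_{\gamma_2}$ equal to the flux of $\sigma$ across any crosscut joining them, namely $M$; and (iii) the pointwise identity $|\nabla v|^{q-2}\nabla v=J\nabla u$, whose divergence is $\text{div}(J\nabla u)=0$, so that $v$ is $q$-harmonic. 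Thus $\tilde v:=(v-v|_{\gamma_2})/M$ is admissible for ${\rm Cap}_q(\gamma_2,\gamma_4;\Omega)$ and, being $q$-harmonic with the correct boundary data, is by uniqueness its extremal; hence ${\rm Cap}_q(\gamma_2,\gamma_4;\Omega)=\int_\Omega|\nabla\tilde v|^q=M^{1-q}$. Substituting and using $(p-1)q=p$ yields $M^{1/p}\,(M^{1-q})^{1/q}=M^{1/p}M^{-1/p}=1$, which is precisely \eqref{dual capa}.

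The main obstacle is regularity. The argument is transparent where $\nabla u\neq0$, but the conjugation $\sigma\mapsto|\nabla v|^{q-2}\nabla v$ and the claim $v\in W^{1,q}(\Omega)$ must be justified on the critical set $\{\nabla u=0\}$. Here I would invoke the planar regularity theory of $p$-harmonic functions (isolated critical points of finite order and local representations via quasiregular maps, in the spirit of Bojarski--Iwaniec and Manfredi), which guarantees that $\sigma$ is H\"older continuous, that the stream function $v$ is well defined and lies in $W^{1,q}(\Omega)$, and that the boundary values are attained continuously. A secondary technical point is the behaviour at the four corners where consecutive arcs meet and the precise sense of the mixed Dirichlet--Neumann conditions; I would handle these by exhausting $\Omega$ with smooth quadrilaterals and passing to the limit in the energy identity and in both capacities.
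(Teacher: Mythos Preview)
The paper does not prove Theorem~\ref{pq duality}; it is quoted as a known result from \cite{AM1999, R2008} (and ultimately \cite{Z1967} in the conformal case) and is used as a black box throughout. So there is no ``paper's own proof'' to compare against.

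That said, your outline is precisely the standard conjugate-function argument that underlies those references, and the algebra is correct: with $\sigma=|\nabla u|^{p-2}\nabla u$ divergence-free in a simply connected domain one gets a stream function $v$ with $\nabla v=J\sigma$, and the pointwise identity $|\nabla v|^{q-2}\nabla v=J\nabla u$ (your check that $(p-1)(q-2)+p-2=0$) makes $v$ $q$-harmonic with the right mixed boundary data, yielding ${\rm Cap}_q=M^{1-q}$ and hence the product formula.

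Two places deserve more care than your sketch gives them. First, you invoke uniqueness of the $q$-extremal to identify $\tilde v$ as the minimizer; it is cleaner, and avoids any circularity about boundary regularity, simply to observe that admissibility of $\tilde v$ gives ${\rm Cap}_q(\gamma_2,\gamma_4;\Omega)\le M^{1-q}$, and then run the identical construction starting from the $q$-extremal to get the reverse inequality. Second, the claim that $u$ attains continuous boundary values and that $\sigma\cdot n=0$ holds on $\gamma_2,\gamma_4$ in a sense strong enough to force $v$ constant there is exactly where the work lies; for an arbitrary Jordan quadrilateral this is not covered by interior $p$-harmonic regularity alone. The references handle this either by conformal transplantation to a smooth model quadrilateral (so that boundary regularity becomes available) or, as you suggest, by exhaustion; but you should be explicit that the conformal change of variables preserves the $p$-Dirichlet integrand only up to the conformal factor, and check that the duality relation is conformally invariant (it is, since both capacities scale the same way). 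With those points tightened your argument matches the literature.
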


To see how the duality in \eqref{sobolev ext} comes from \eqref{dual capa}, suppose $\Omega$ is a  $W^{1,\,q}$-extension Jordan domain with $2<q<\infty$ and write 
$$\partial \Omega=\bigcup_{i=1}^{4} \gamma_i$$
as in Theorem~\ref{pq duality}. By denoting $\tilde \Omega$ the complementary domain of $\Omega$, since $\Omega$ is a  $W^{1,\,q}$-extension Jordan domain, there exists a constant $C_0\ge 1$ so that
$${\rm Cap}_q(\gamma_2,\,\gamma_4;\,\tilde \Omega)\le C_0 \ {\rm Cap}_q(\gamma_2,\,\gamma_4;\,\Omega). $$
Then by applying Theorem~\ref{pq duality} to both $\Omega$ and $\tilde \Omega$ (together with a technical lemma \cite[Lemma 2.1]{KRZ2015} saying that we can always swap an unbounded domain with compact boundary to a bounded domain (and vice versa) with the same extendability), we conclude that
$${\rm Cap}_p(\gamma_1,\,\gamma_3;\,\Omega)\le C(p,\,C_0) {\rm Cap}_p(\gamma_1,\,\gamma_3;\,\tilde \Omega). $$
By the arbitrariness of $\gamma_1,\,\gamma_3\subset\partial \Omega$, we conclude the extendability of $p$-capacity functions in $\tilde \Omega$, i.e.\ those functions in $W^{1,\,p}(\tilde \Omega)$ take value $0$ and $1$ on two distinct subarcs of $\partial \Omega$, respectively. Then by the fact the exdentability of $p$-capacity functions in a planar Jordan domain implies the extendability of $W^{1,\,p}$-functions in that domain, we conclude the desired duality of Sobolev extendability.  This fact was proven in \cite{KRZ2015} via an indirect method, and also see the recent paper \cite{KPZ2019} for further information.

To be more specific, for $1<p<2$ in \cite{KRZ2015} we first show that, if every $p$-capacity function in a Jordan domain $\Omega$ is  extendable, then its complementary domain $\tilde \Omega$ is $(2-p)$-subhyperbolic, i.e.\  for $1<p<2$ and every $z_1,\,z_2\in \tilde \Omega$, there exist a constant $C_1>0$ and a curve $\gamma \subset  \tilde \Omega$
 joining $z_1$ and $z_2$  such that
 \begin{equation}\label{curve condition}
  \int_{\gamma}\dist(z,\partial \Omega)^{1-p}\,\d s(z) 
 \le  C_1 |z_1-z_2|^{2-p}.
\end{equation}
Then via this curve condition we constructed an extension operator for all functions in $W^{1,\,p}(\Omega)$.

The curve condition \eqref{curve condition} has been studied for a long time, up to notational change on the exponent. For example it can be used to characterize $Lip_{\az}$-extension domain; see e.g.\ \cite{GM1985}, \cite{la1985}. Also it appears in the characterization of $W^{1,\,q}$-extension domains with $q>2$; see for instance \cite{BK1996}, \cite{ko1998} and \cite{S2010}. 

In this paper we show the relation between \eqref{curve condition} and the duality of capacities \eqref{dual capa}. Towards this, let us introduce some terminology. 
Rcall that  the left-hand side of \eqref{curve condition} is called the  {\it $p$-subhyperbolic length of $\gamma$} for $1<p<2$, and accordingly we define the {\it $p$-subhyperbolic distance between $z_1,\,z_2$}, denoted by $d_p(z_1,\,z_2)$, via taking infimum of the $p$-subhyperbolic length among all the curves joining $z_1,\,z_2$ in $\Omega$. 
For more properties of this metric we refer to  \cite{S2010}.

For $1<p\le 2$ and $z_1,\,z_2\in \Omega$, the {\it $p$-capacity between $z_1$ and $z_2$ in $\Omega$} is defined as
\[
d_{{\rm Cap}_p}(z_1,\,z_2; \Omega)=\inf_{\gamma}\, {\rm Cap}_p(\gamma,\,\partial \Omega;\,\Omega), 
\]
where the infimum is again taken over all the curves connecting $z_1$ and $z_2$ inside $\Omega$. We remark that the triangle inequality for this metric follows naturally from the subadditivity of capacity; see e.g.\ \cite[Theorem 2 (vii), Chapter 4.7]{EG1992}.

The theorems below indicates the relation 
between the $p$-capacity metric and the  $(2-p)$-subhyperbolic metric   for $1<p<2$ in the plane. 

\begin{thm}
\label{thm:main_1a}
Let $1<p<2$ and $\Omega \subset \mathbb C$ be a Jordan domain and $p\in (1,2)$, and $z_1,\,z_2\in \Omega$. Then for the hyperbolic geodesic $\gamma$ joining $z_1,\,z_2$, we have
$$
{\rm Cap}_p(\gamma,\partial \Omega;\Omega) \sim d_p(z_1,\,z_2)\sim d_{{\rm Cap}_p}(z_1,z_2) \sim \int_{\gamma}\dist(z,\,\partial \Omega)^{1-p}\, dz,
$$
where the constants depend only on $p$. 
\end{thm}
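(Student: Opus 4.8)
The plan is to reduce all four quantities to a single combinatorial weight along the geodesic and then show the hyperbolic geodesic is, up to a constant depending only on $p$, the cheapest competitor in both infima. Throughout I would fix a Whitney-type chain along $\gamma$: points $w_j\in\gamma$ at consecutive hyperbolic distance $\sim 1$, with $d_j:=\dist(w_j,\partial\Omega)$. By the Koebe distortion theorem $\dist(\cdot,\partial\Omega)^{-1}$ is comparable to the hyperbolic density, so the Euclidean diameter of $\gamma\cap B(w_j,d_j)$ is $\sim d_j$, and a first bookkeeping step records
$$\int_\gamma \dist(z,\partial\Omega)^{1-p}\,ds(z)\sim\sum_j d_j^{2-p},$$
with the analogous discrete weighted length of any curve read off along its own Whitney chain.

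Two inequalities are immediate: since $\gamma$ is an admissible competitor in the infima defining $d_{{\rm Cap}_p}$ and $d_p$, we have $d_{{\rm Cap}_p}(z_1,z_2)\le {\rm Cap}_p(\gamma,\partial\Omega;\Omega)$ and $d_p(z_1,z_2)\le\int_\gamma\dist^{1-p}\,ds$. For the matching capacity upper bound I would, for an arbitrary curve $\sigma$ joining $z_1$ and $z_2$, build the test function $u=\max\{0,1-\delta_\rho\}$, where $\delta_\rho$ is the distance induced by a density $\rho\sim\dist^{-1}$ supported on a hyperbolic-width-one tube around $\sigma$. Any curve from $\sigma$ to $\partial\Omega$ must exit this tube and so has $\rho$-length $\gtrsim 1$, giving (after normalizing $\rho$) $u=1$ on $\sigma$, $u=0$ on $\partial\Omega$ and $|\nabla u|\le\rho$; since $\int_\Omega\rho^p\sim\int_\sigma\dist^{1-p}\,ds$ this yields ${\rm Cap}_p(\sigma,\partial\Omega;\Omega)\lesssim\int_\sigma\dist^{1-p}\,ds$. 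Taking $\sigma=\gamma$ and then infima gives ${\rm Cap}_p(\gamma,\partial\Omega;\Omega)\lesssim\int_\gamma\dist^{1-p}\,ds$ and $d_{{\rm Cap}_p}\lesssim d_p$.

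The core analytic estimate is a localized condenser lower bound. On each ball $B(w_j,Cd_j)$ the continuum $\gamma\cap B(w_j,d_j)$ (where $u=1$) and a boundary continuum $\partial\Omega\cap\overline{B(w_j,Cd_j)}$ of diameter $\gtrsim d_j$ (where $u=0$) — the latter forced to be large because the Jordan curve $\partial\Omega$ must leave $B(w_j,Cd_j)$ — form a planar condenser of two continua of size $\sim d_j$ at mutual distance $\lesssim d_j$ in a domain containing the disc $B(w_j,d_j)$; by scaling its $p$-capacity is $\gtrsim d_j^{2-p}$. Summing over $j$ using the bounded overlap of the enlarged balls along a hyperbolic geodesic gives ${\rm Cap}_p(\gamma,\partial\Omega;\Omega)\gtrsim\sum_j d_j^{2-p}$, hence ${\rm Cap}_p(\gamma,\partial\Omega;\Omega)\sim\int_\gamma\dist^{1-p}\,ds$. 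Applying the same estimate to the Whitney squares that an arbitrary competitor $\sigma$ genuinely crosses — extracted as a simple subchain to preserve bounded overlap and diameter control — yields ${\rm Cap}_p(\sigma,\partial\Omega;\Omega)\gtrsim\sum_{\sigma}(\mathrm{scale})^{2-p}$, a sum bounded below by the minimal Whitney-chain sum between $z_1$ and $z_2$, i.e. by $\sim d_p(z_1,z_2)$, so that $d_{{\rm Cap}_p}\gtrsim d_p$.

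What remains, and what I expect to be the main obstacle, is the quasi-optimality of the hyperbolic geodesic: that the minimal Whitney-chain sum is $\sim\sum_j d_j^{2-p}$, equivalently $\int_\gamma\dist^{1-p}\,ds\lesssim\int_\sigma\dist^{1-p}\,ds$ for every competitor $\sigma$. This is a Gehring--Hayman-type statement for the weight $\dist^{1-p}$. I would prove it by intersecting $\sigma$ with the transverse hyperbolic crosscuts through the $w_j$: each separates $z_1$ from $z_2$, so $\sigma$ must cross it, and whenever $\sigma$ avoids $B(w_j,Cd_j)$ it does so only via an excursion toward $\partial\Omega$ descending through the scales $d_j,d_j/2,d_j/4,\dots$. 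Here the hypothesis $1<p<2$ is essential: since $2-p>0$, the series $\sum_k (2^{-k}d_j)^{2-p}$ converges to $\sim d_j^{2-p}$, so the descent already costs $\gtrsim d_j^{2-p}$, charged to the largest scales of the excursion. The delicate point, where I expect to spend the most care, is organizing these charges through the crosscut slabs so that each unit of length is counted boundedly often — precisely the Gehring--Hayman bookkeeping, which may instead be invoked through the quasigeodesic property of the subhyperbolic metric recorded in \cite{S2010}. Granting this, $d_p$, $d_{{\rm Cap}_p}$ and $\int_\gamma\dist^{1-p}\,ds$ are all comparable to $\sum_j d_j^{2-p}$, which together with ${\rm Cap}_p(\gamma,\partial\Omega;\Omega)\sim\sum_j d_j^{2-p}$ closes the chain of equivalences.
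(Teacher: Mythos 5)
Your outline is sound and reaches the same chain of comparabilities, but by a genuinely different route, and it is worth seeing what each choice buys. First, for the block-by-block lower bound ${\rm Cap}_p(\gamma,\partial\Omega;\Omega)\gtrsim\sum_j d_j^{2-p}$ you invoke a direct two-continua (Loewner-type) condenser estimate in Euclidean balls $B(w_j,Cd_j)$; the paper instead proves this (Lemma~\ref{annulus capacity}) by running the $p$--$q$ duality of Theorem~\ref{pq duality} backwards, so that the hard lower bound for the $p$-capacity becomes an easy upper bound for the dual $q$-capacity, witnessed by one explicit Lipschitz test function. Your route is more elementary but carries a technical obligation you do not address: an admissible $u$ lives only on $\Omega$, so the condenser estimate must be run in $B(w_j,Cd_j)\cap\Omega$ rather than in the full ball, and one has to extend $u$ by zero across $\partial\Omega$ (using continuity and $u\equiv 0$ there, e.g.\ via the truncation $\max\{u-\epsilon,0\}$) before the Euclidean two-continua bound applies; note also that for $1<p<2$ points have zero $p$-capacity, so the estimate genuinely needs both plates to be continua of diameter $\gtrsim d_j$. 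Second, and more importantly, the step you single out as the main obstacle --- the Gehring--Hayman-type inequality $\int_\gamma\dist^{1-p}\lesssim\int_\sigma\dist^{1-p}$ for every competitor $\sigma$ --- is exactly where the paper's choice of decomposition pays off: instead of Euclidean Whitney chains it uses the conformal annuli $A(y_i,2^{-k})$ of Bonk--Koskela--Rohde (Lemma~\ref{conformal annulus}), which are topological obstacles that \emph{every} curve from $z_1$ to $z_2$ must cross, with crossing length $\gtrsim\ell(\gamma_{i,k})$ by \eqref{eq:ca_2} and with $\dist(\cdot,\partial\Omega)$ controlled from above there; this yields \eqref{az} in one line and makes your crosscut-and-dyadic-descent bookkeeping unnecessary. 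So your plan closes, but only after either carrying out that delicate charging argument or importing the quasigeodesic property from \cite{S2010}; the paper's conformal-annulus framework dissolves the difficulty at the level of the decomposition itself. Your tube test function for the upper bound ${\rm Cap}_p(\sigma,\partial\Omega;\Omega)\lesssim\int_\sigma\dist^{1-p}\,ds$ is fine and slightly more direct than the paper's appeal to subadditivity plus Lemma~\ref{p capacity}, modulo the usual endpoint degeneracies (when $|z_1-z_2|\ll\dist(z_i,\partial\Omega)$) that the paper's statement also glosses over.
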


We note that, in \cite{V1985} the relations between $n$-capacity metric and quasihyperbolic metric was  studied via quasiconformal mappings in $\mathbb R^n$. With the two theorems above, we are able to read both sides of \eqref{curve condition} in terms of capacities and reveal their relations. We prove it in the last section.

\begin{cor}\label{cor}
Let $\Omega$ be a Jordan $W^{1,\,q}$-extension domain in the plane with $2<q<\infty$. Then for any two points $z_1,\,z_2\in \Omega$,  there exists a curve $\gamma \subset  \Omega$
 joining $z_1$ and $z_2$  such that
\begin{equation*} 
\int_{\gamma}\dist(z,\partial \Omega)^{\frac 1 {q-1}}\,\d s(z) 
 \lesssim |z_1-z_2|^{\frac{q-2}{q-1}},
\end{equation*}
where the constant depend only on the norm of the extension operator and $q$.
\end{cor}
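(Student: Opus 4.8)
The plan is to derive Corollary~\ref{cor} from Theorem~\ref{thm:main_1a} together with the duality of capacities in Theorem~\ref{pq duality} and the extendability of the domain. I set $p = q/(q-1) \in (1,2)$, so that $p$ and $q$ are conjugate exponents and $\tfrac{1}{q-1} = p-1 = 1-(p-1)$ matches the exponents in the target inequality (since the integrand $\dist(z,\partial\Omega)^{1/(q-1)}$ is exactly $\dist(z,\partial\Omega)^{1-p}$ up to the sign convention, and the right-hand exponent $\tfrac{q-2}{q-1} = 2-p$). Thus the claimed inequality is literally the $p$-subhyperbolic bound \eqref{curve condition} with $1<p<2$, and by Theorem~\ref{thm:main_1a} it suffices to control either $d_p(z_1,z_2)$ or, equivalently, $d_{{\rm Cap}_p}(z_1,z_2)$ by $|z_1-z_2|^{2-p}$.

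The first step is to reduce everything to a capacity estimate. By Theorem~\ref{thm:main_1a}, for the hyperbolic geodesic $\gamma$ between $z_1$ and $z_2$ we have
$$
\int_{\gamma}\dist(z,\partial\Omega)^{1-p}\,\d s(z) \sim d_{{\rm Cap}_p}(z_1,z_2) \lesssim {\rm Cap}_p(\gamma',\partial\Omega;\Omega)
$$
for a suitable competitor curve $\gamma'$, so the problem becomes: bound $d_{{\rm Cap}_p}(z_1,z_2;\Omega)$ from above by $|z_1-z_2|^{2-p}$ using the $W^{1,q}$-extendability of $\Omega$. The second step is to invoke the duality. Enclose a small round ball $B$ containing $z_1,z_2$ (with radius comparable to $|z_1-z_2|$) and split its boundary circle, together with the relevant boundary arcs, into four arcs $\gamma_1,\dots,\gamma_4$ as in Theorem~\ref{pq duality}, arranged so that ${\rm Cap}_p(\gamma_1,\gamma_3;\Omega)$ governs the capacity metric $d_{{\rm Cap}_p}(z_1,z_2)$ while the conjugate quantity ${\rm Cap}_q(\gamma_2,\gamma_4;\cdot)$ is the one controlled by extendability. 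Applying \eqref{dual capa} to both $\Omega$ and its complement $\tilde\Omega$, and using the extension operator to compare ${\rm Cap}_q(\gamma_2,\gamma_4;\tilde\Omega)\lesssim {\rm Cap}_q(\gamma_2,\gamma_4;\Omega)$ with constant depending only on the operator norm and $q$, yields
$$
{\rm Cap}_p(\gamma_1,\gamma_3;\Omega) \lesssim {\rm Cap}_p(\gamma_1,\gamma_3;\tilde\Omega),
$$
exactly as in the introductory argument. The final step is to estimate the right-hand capacity in the complement by a trial function and a direct computation, producing the power $|z_1-z_2|^{2-p}$.

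The main obstacle, which I would treat carefully rather than routinely, is the geometric bookkeeping in the second step: choosing the four arcs so that the pair $(\gamma_1,\gamma_3)$ separating $z_1$ from $z_2$ really does capture $d_{{\rm Cap}_p}(z_1,z_2)$ up to constants, while the dual pair $(\gamma_2,\gamma_4)$ is the one whose $q$-capacity is comparable across $\partial\Omega$. Concretely, one must ensure that the subarcs of $\partial\Omega$ used as the ``grounded'' set in the definition of $d_{{\rm Cap}_p}$ match the arcs appearing in the duality relation, and that the competitor curve $\gamma'$ realizing the capacity bound can be taken inside $\Omega$ connecting $z_1$ and $z_2$. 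Since Theorem~\ref{pq duality} is stated for a Jordan domain enclosed by exactly four arcs on $\partial\Omega$, some care is needed when $z_1,z_2$ are interior points: I would either exhaust $\Omega$ by the region outside a neighborhood of $\{z_1,z_2\}$ or appeal directly to the equivalences in Theorem~\ref{thm:main_1a} so that only boundary capacities enter. Once the arcs are correctly arranged, the rest is a matter of chaining the four equivalences of Theorem~\ref{thm:main_1a} with the duality identity and the extension estimate, and tracking that all constants depend only on $q$ and the norm of the extension operator.
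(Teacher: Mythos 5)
Your reduction to a capacity bound via Theorem~\ref{thm:main_1a} and your identification of the exponents ($p=q/(q-1)$, $\frac{1}{q-1}=1-p$, $\frac{q-2}{q-1}=2-p$) match the paper, but the core of your second and third steps diverges from the paper's proof in a way that leaves a genuine gap. You propose to transfer the estimate to the complement, obtaining ${\rm Cap}_p(\gamma_1,\gamma_3;\Omega)\lesssim {\rm Cap}_p(\gamma_1,\gamma_3;\tilde\Omega)$ from extendability and duality, and then to finish by ``a trial function and a direct computation'' in $\tilde\Omega$ producing $|z_1-z_2|^{2-p}$. That last step is exactly where the difficulty sits, and as stated it fails: the extendability hypothesis has only been spent on a \emph{lower} bound for the complement capacity, so the required \emph{upper} bound ${\rm Cap}_p(\gamma_1,\gamma_3;\tilde\Omega)\lesssim |z_1-z_2|^{2-p}$ would have to hold by bare geometry, and it does not. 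If $\gamma_1,\gamma_3$ are the two complementary boundary arcs meeting at $z_1,z_2$, they may come close to each other inside $\tilde\Omega$ at places far from $z_1$ and $z_2$, which makes ${\rm Cap}_p(\gamma_1,\gamma_3;\tilde\Omega)$ large independently of $|z_1-z_2|$; ruling this out requires the extension property again, in a form your outline does not supply. There is also a mismatch in your setup: Theorem~\ref{pq duality} requires the four arcs to partition the boundary of the quadrilateral domain, so a round circle $\partial B$ around $z_1,z_2$ cannot serve as two of the sides.

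The paper's proof never passes to $\tilde\Omega$. For $z_1,z_2\in\partial\Omega$ it takes the hyperbolic geodesic $\gamma$ between them, lets $\Gamma_1,\Gamma_2$ be the two complementary boundary arcs, and applies a \emph{degenerate} version of Theorem~\ref{pq duality} (justified by approximation) in which two opposite sides collapse to the points $z_1$ and $z_2$. The dual object is then the two-point capacity ${\rm Cap}_q(z_1,z_2;\Omega)$, which is positive precisely because $q>2$, and this is where the hypothesis enters: the extension operator together with Morrey's inequality gives ${\rm Cap}_q(z_1,z_2;\Omega)\sim{\rm Cap}_q(z_1,z_2;\mathbb R^2)\sim|z_1-z_2|^{2-q}$, and the duality converts this into ${\rm Cap}_p(\gamma,\partial\Omega;\Omega)\lesssim({\rm Cap}_q(z_1,z_2;\Omega))^{-1/(q-1)}\lesssim|z_1-z_2|^{2-p}$. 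Finally, the reduction from interior points to boundary points, which you flag but do not carry out, is done explicitly in the paper by a three-case argument (a line segment when $z_1,z_2$ are deep inside relative to $|z_1-z_2|$, otherwise concatenating the segments to nearest boundary points with the geodesic between those projections). To repair your argument you should replace the complement computation by the two-point $q$-capacity evaluation inside $\Omega$; the rest of your outline then falls into place.
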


All the results above can be extended to the case where $\Omega$ is not Jordan but simply connected in the plane, via exhausting $\Omega$ by a sequence of Jordan domains. However, for the simplicity of the statement we omit it.

$\,$\\
{\bf Acknowledgment: }The author  would like to appreciate the anonymous referee for his careful reading and nice suggestions for organizing the paper. 

\section{Prerequisites}

We usually write the constants as positive real numbers $C(\cdot)$ with paranthesis including all the parameters on which the constant depends. The constant $C(\cdot)$ may vary between appearances, even within a chain of inequalities. By $a \sim b$ we mean that $b/C \le a \le Cb$ for some constant $C \ge 2$. 

For Euclidean spaces $\mathbb R^n$, we denote the distance of sets $A$ and $B$ by $\dist(A,B)$, and the diameter of a set $A$ by $\diam(A)$. Given an interval $I$ in $\mathbb R$, we call a continuous map $I \to X$ a \emph{path} and its image a \emph{curve}; the image of an injective map we call an \emph{arc}. We denote by $\ell(\gamma)$ the length of the curve $\gamma$. Furthermore, if $\gamma$ is an arc, then we refer to $\gamma[x,\,y]$ the subarc of $\gamma$ between points $x$ and $y$ in $\gamma$. The unit disk in $\R^2$ we denote by $\D$.

Recall that the image $\Gamma$ of an embedding $\S^1 \to   \mathbb C$ is called a \emph{Jordan curve} and, by the Jordan curve theorem, the set $  \mathbb C\setminus \Gamma$ has exactly two componets, both homeomorphic to the (open) unit disk $\mathbb D$. The bounded components of $ \mathbb C\setminus \Gamma$ are called \emph{Jordan domains}. By the Riemann mapping theorem, for each Jordan domain $\Omega$ in $\mathbb C$, there exists a conformal map $\mathbb D \to \Omega$. Moreover, given a Jordan domain $\Omega$ and a conformal map $\varphi\colon \mathbb D \to \Omega$, $\varphi$ has a homeomorphic extension $\overline{\mathbb D} \to \overline{\Omega}$ by the Caratheodory-Osgood theorem, see e.g.\;\cite{P1991}. 

Recall that for points $z_1$ and $z_2$ in $\D$, their hyperbolic distance is
\[
d_h(z_1,\,z_2)=\inf_{\az}\int_{\az} \frac {2}{1-|z|^2}\,|dz|,
\]
where the infimum is over all rectifiable curves $\az$ joining $z_1$ to $z_2$ in $\mathbb D$. The hyperbolic geodesics in $\D$ are arcs of (generalized) circles that intersect the unit circle orthogonally. 

Let $\Omega$ be a Jordan domain $\mathbb C$ with a base point $z_0$. Given $z\in \partial \Omega$ and $r>0$, we define the \emph{conformal annulus $A(z,r;\Omega)$} by
\[
A(z,r;\Omega) = \varphi\left( \{ x\in \overline{\D}\colon r/2 < |x-\varphi^{-1}(z)| < r\}\right),
\]
where $\varphi$ is the homeomorphic extension of a conformal map $\D \to \Omega$ satisfying $0 \mapsto z_0$. We supress again the role of the base point $z_0$ in the notation. Also for notational convenience we write $A(y_i,\,k)$ instead of $A(y_i,\,k;\, \Omega)$ if the domain $\Omega$ in question is clear from the context.

For the conformal annuli, we have the following comparison lemma; the proof of the analog of it in \cite[Page 645]{BKR1998} gives our version with notational changes.

\begin{lem}\label{conformal annulus}
Let $\Omega$ be a Jordan domain, $y_1,y_2\in \partial \Omega$, and let $\gamma$ be the hyperbolic geodesic in $\Omega$ joining $y_1$ and $y_2$. For each $k\in \Z_+$, let $\gamma_{i,k} = A(y_i,2^{-k};\,\Omega) \cap \gamma$. Then 
\begin{equation}
\label{eq:ca_1}
\diam(\gamma_{i,k})\sim \dist(\gamma_{i,k},\partial \Omega)\sim \ell(\gamma_{i,k})\sim \ell(\gamma_{i,\,k+1}).
\end{equation}
Furthermore, if $\alpha$ is a curve in $A(z_i,2^{-k})$ joining components of $\partial A(z_i,2^{-k})\cap \Omega$, then
\begin{equation}
\label{eq:ca_2}
\ell(\alpha)\gtrsim \ell(\gamma_{i,\,k}).
\end{equation}
The constants of comparability in \eqref{eq:ca_1} and \eqref{eq:ca_2} are independent of $\Omega$, points $y_1$ and $y_2$, the parameter $k$, and the (tacitly omitted) base point $z_0$.
\end{lem}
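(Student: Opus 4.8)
The plan is to transfer everything to the unit disk via the Riemann map and then use the Koebe distortion theorem. Fix a conformal $\varphi\colon\D\to\Omega$ with homeomorphic extension $\overline\D\to\overline\Omega$, and set $\xi_i=\varphi^{-1}(y_i)\in\S^1$. Since $\varphi$ is a hyperbolic isometry it maps the hyperbolic geodesic $\tilde\gamma\subset\D$ joining $\xi_1,\xi_2$ onto $\gamma$; as $\tilde\gamma$ meets $\S^1$ orthogonally it approaches each $\xi_i$ essentially radially. Writing $R_k=\{x\in\overline\D: 2^{-k-1}<|x-\xi_i|<2^{-k}\}$ and $\tilde\gamma_{i,k}=\tilde\gamma\cap R_k$, a direct computation in the hyperbolic and Euclidean geometry of $\D$ gives $\diam(\tilde\gamma_{i,k})\sim\dist(\tilde\gamma_{i,k},\S^1)\sim\ell(\tilde\gamma_{i,k})\sim 2^{-k}$, and all points of $\tilde\gamma_{i,k}\cup\tilde\gamma_{i,k+1}$ lie within a fixed hyperbolic distance of a single point $w_{i,k}\in\tilde\gamma_{i,k}$. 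This is the elementary input.

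To obtain \eqref{eq:ca_1} I would push these comparabilities forward with two standard distortion facts: the Koebe estimate $\dist(\varphi(x),\partial\Omega)\sim(1-|x|)\,|\varphi'(x)|$, and the fact that on a hyperbolic ball of bounded radius both $|\varphi'|$ and $(1-|\cdot|)$ are comparable to their values at the centre. Applying these on the bounded hyperbolic neighbourhood of $w_{i,k}$ yields
$$\ell(\gamma_{i,k})=\int_{\tilde\gamma_{i,k}}|\varphi'|\sim|\varphi'(w_{i,k})|\,2^{-k}=:D_k,$$
and the same quantity $D_k$ is comparable to $\diam(\gamma_{i,k})$ (upper bound from the length, lower bound from the separation of the two endpoints), to $\dist(\gamma_{i,k},\partial\Omega)$ (via the Koebe estimate), and to $\ell(\gamma_{i,k+1})$ (since $w_{i,k+1}$ is at bounded hyperbolic distance from $w_{i,k}$, so $|\varphi'(w_{i,k+1})|\sim|\varphi'(w_{i,k})|$). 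This establishes \eqref{eq:ca_1}.

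For \eqref{eq:ca_2} I would work with the two boundary components $E_1,E_2$ of $A(z_i,2^{-k})\cap\Omega$, namely the images under $\varphi$ of the two circular arcs $\{|x-\xi_i|=2^{-k}\}\cap\D$ and $\{|x-\xi_i|=2^{-k-1}\}\cap\D$. Each $E_j$ is a continuum that contains a point of $\gamma$ lying at distance $\sim D_k$ from $\partial\Omega$ together with its endpoints on $\partial\Omega$, so $\diam(E_j)\gtrsim D_k$. Since $\varphi$ is conformal, the modulus of the family of curves in $\Omega$ joining $E_1$ and $E_2$ equals the modulus of the family joining the two concentric arcs in $\D$, which is an absolute constant. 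The goal is then to convert this bounded modulus, together with $\diam(E_j)\gtrsim D_k$, into the pointwise length bound $\ell(\alpha)\gtrsim D_k$ for every crossing curve $\alpha$.

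The main obstacle is exactly this last step. A modulus bound controls the whole curve family but not the length of an individual curve, and $E_1,E_2$ can genuinely come close to one another in the plane near $\partial\Omega$ (for instance near an inward cusp), so one cannot simply bound $\ell(\alpha)$ below by the Euclidean distance between $E_1$ and $E_2$. The content of \eqref{eq:ca_2} is that such short-cuts are unavailable from within $\Omega$: a curve that tries to join $E_1$ and $E_2$ cheaply by hugging the boundary is forced to travel along a boundary arc subtended by the annulus whose diameter is itself $\sim D_k$. Establishing this—equivalently, that the boundary arcs cut out by the conformal annulus have diameter comparable to its scale $D_k$, and that $E_1,E_2$ are uniformly separated inside $\Omega$ at scale $D_k$—is where the conformal-geometric work lies; this is precisely the estimate carried out in \cite[p.\,645]{BKR1998}, which I would adapt with the notational changes indicated above. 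Once the separation is in hand, \eqref{eq:ca_2} follows, since any $\alpha$ joining $E_1$ and $E_2$ inside $A(z_i,2^{-k})$ then has length $\gtrsim D_k\sim\ell(\gamma_{i,k})$.
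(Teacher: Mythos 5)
Your proposal is correct and takes essentially the same route as the paper, which itself offers no argument beyond citing the estimate on p.~645 of \cite{BKR1998}: your Koebe--Harnack derivation of \eqref{eq:ca_1} via the Riemann map is the standard one, and for \eqref{eq:ca_2} you rightly observe that conformal invariance of modulus cannot bound the length of an individual crossing curve and that the required Gehring--Hayman-type separation estimate is precisely what must be imported from \cite{BKR1998}. Nothing further is needed.
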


We can apply Lemma~\ref{conformal annulus} to show the following capacity estimate

\begin{lem}\label{annulus capacity}
With the assumptions and notation in Lemma~\ref{conformal annulus}, for every conformal annulus $A(y_i,\,k;\, \Omega)$ with $\gamma_{i,\,k}\subset \gamma[z_1,\,z_2]$,  one has
\[
\ell(\gamma_{i,\,k})^{2-p} \lesssim  \int_{A(y_i,\,k)}|\nabla u|^p\, dx
\] 
for each $u\in \Delta(\partial \Omega\cap A(y_i,\,k;\, \Omega),\,\gamma_{i,\,k};\,A(y_i,\,k;\, \Omega))$, where the constant depends only on $p$. 
\end{lem}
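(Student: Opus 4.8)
The plan is to bound the $p$-energy from below by first controlling the $1$-energy $\int_A|\nabla u|\,dx$ through the coarea formula, and then upgrading to the $p$-energy via H\"older's inequality together with the area bound $|A|\ls \ell^2$; here I abbreviate $A:=A(y_i,\,k;\,\Omega)$ and $\ell:=\ell(\gamma_{i,\,k})$. Write $E:=\partial\Omega\cap A$ and $F:=\gamma_{i,\,k}$, so that $u=1$ on $E$ and $u=0$ on $F$. Denote by $I$ and $O$ the two components of $\partial A\cap\Omega$, namely the images under $\varphi$ of the inner and outer circular arcs bounding the conformal annulus. By construction the geodesic piece $F$ enters $A$ across the outer arc and leaves across the inner arc, so $F$ runs from $O$ to $I$, joining these two components.

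First I would invoke the coarea formula for $u\in W^{1,1}(A)$ (recall $|\nabla u|\in L^p(A)\subset L^1(A)$ since $p>1$ and $A$ is bounded) to write
\[
\int_A|\nabla u|\,dx \ge \int_0^1 \HH^1\big(A\cap u^{-1}(s)\big)\,ds .
\]
For each $s\in(0,1)$ the superlevel set $\{u>s\}$ contains $E$ while the sublevel set $\{u<s\}$ contains $F$, so the level set $L_s:=A\cap u^{-1}(s)$ separates $E$ from $F$ inside the closed quadrilateral $\overline A$, whose four boundary sides, read cyclically, are the two arcs of $E\subset\partial\Omega$ together with $I$ and $O$. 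Since $F$ joins the opposite sides $I$ and $O$, it splits $\overline A$ into two topological quadrilaterals, in each of which a pair of opposite sides is carried by $F$ and by (one arc of) $E$, and the complementary pair by subarcs of $I$ and $O$. In each of these quadrilaterals $L_s$ must separate the side on $F$ from the opposite side on $E$, and a set separating one pair of opposite sides of a quadrilateral necessarily contains a continuum joining the other pair; hence $L_s$ contains a continuum $K_s$ joining $I$ to $O$.

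For a.e.\ $s\in(0,1)$ the level set has finite length, because $\int_0^1\HH^1(L_s)\,ds\le\int_A|\nabla u|\,dx<\infty$; a continuum of finite $\HH^1$-measure is rectifiably connected, so $K_s$ contains a rectifiable arc $\alpha_s$ joining $I$ and $O$, i.e.\ joining the two components of $\partial A\cap\Omega$. Estimate \eqref{eq:ca_2} of Lemma~\ref{conformal annulus} then gives $\ell(\alpha_s)\gs\ell$, whence $\HH^1(L_s)\ge \HH^1(K_s)\ge \ell(\alpha_s)\gs \ell$ for a.e.\ $s\in(0,1)$. Feeding this into the coarea bound yields $\int_A|\nabla u|\,dx\gs\ell$. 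Finally, H\"older's inequality gives
\[
\ell \ls \int_A|\nabla u|\,dx \le \Big(\int_A|\nabla u|^p\,dx\Big)^{1/p}|A|^{1-1/p},
\]
and since the distortion estimates underlying Lemma~\ref{conformal annulus} give $\diam(A)\sim \ell$ and hence $|A|\ls \ell^2$, rearranging produces $\int_A|\nabla u|^p\,dx\gs \ell^{\,2-p}$, which is the claim.

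The step that needs the most care is the separation argument in the second paragraph: for a Sobolev (rather than smooth) $u$ one must justify that for a.e.\ $s$ the level set $L_s$ is a genuine separating set of finite length. I would handle this either through the measure-theoretic boundary in the coarea formula, or, more cleanly, by mollifying $u$ to smooth functions $u_\epsilon$, applying Sard's theorem to obtain regular values, bounding each smooth level set below by $\gs\ell$ via the (stable) separation property and \eqref{eq:ca_2}, and then letting $\epsilon\to0$. The remaining inputs—that a finite-length continuum joining $I$ and $O$ contains a rectifiable crossing arc, and the elementary planar fact that a set separating one pair of opposite sides of a quadrilateral must join the other pair—are standard, so that the only genuinely geometric ingredient is the crossing-length bound \eqref{eq:ca_2}, which is exactly what Lemma~\ref{conformal annulus} supplies.
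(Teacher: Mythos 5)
Your argument is correct, but it is genuinely different from the one in the paper. You prove the lower bound directly: coarea formula, the observation that almost every level set of $u$ separates $\partial\Omega\cap A$ from $\gamma_{i,k}$ and hence contains a continuum crossing from $I$ to $O$, the crossing-length bound \eqref{eq:ca_2} to get $\HH^1(L_s)\gtrsim\ell$, and finally H\"older with $|A|\lesssim\ell^2$ to pass from the $1$-energy to the $p$-energy. The paper instead never touches the level sets of $u$: it invokes the duality Theorem~\ref{pq duality}, which converts the desired \emph{lower} bound on the $p$-capacity of the quadrilateral (one component $A'$ of $A(y_i,k)\setminus\gamma_{i,k}$, with sides an arc of $\partial\Omega$, a piece of $I$, the geodesic $\gamma_{i,k}$, and a piece of $O$) into an \emph{upper} bound of order $\ell^{2-q}$ on the dual $q$-capacity, which is then obtained by exhibiting a single explicit Lipschitz test function supported between two well-separated hyperbolic geodesics of length $\sim\ell$ (constructed as in \cite[Section 3.2]{KZ2016}), using $|\nabla w|\lesssim\ell^{-1}$ and $|A'|\lesssim\ell^2$. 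Your route has the advantage of being logically independent of Theorem~\ref{pq duality} and of the geodesic construction, so the lemma would not secretly depend on the duality machinery it is later combined with; the price is exactly the technical point you flag yourself, namely justifying for a Sobolev (non-smooth) $u$ that a.e.\ level set is a closed separating set of finite length to which the quadrilateral crossing lemma applies --- your mollification-plus-Sard plan works but needs care near $E\cup F$, where one must use the continuity of $u$ up to those boundary arcs to keep the approximate level sets separating. The paper's duality trick sidesteps all of this, since upper bounds on capacity require only one test function. Both proofs draw their geometric content from Lemma~\ref{conformal annulus}, you from \eqref{eq:ca_2} and the paper from \eqref{eq:ca_1} together with the separated-geodesics construction.
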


\begin{proof}
	Fix the domain $\Omega$ and let $q=\frac p {p-1}.$
By Theorem~\ref{pq duality}, we only need to bound the $q$-capacity of (a part of) the outer boundary and (a part of) the inner boundary of the conformal annulus $A(y_i,\,k)$  inside one of the components $A$ of $A(y_i,\,k)\setminus \gamma_{i,\,k}$  from above by a multiple of $\ell(\gamma_{i,\,k})^{2-q}$.

Let us fix one of the component $A$. Inside the topological rectangle $A$, via an argument similar to \cite[Section 3.2]{KZ2016}, we can find two hyperbolic geodesics $\gamma_1\subset A(y_i,\,k)$ and $\gamma_2\subset A(y_i,\,k)$ joining $\partial \Omega$ to $\gamma_{i,\,k}$  such that 
$$\ell(\gamma_1)\sim \ell(\gamma_2)\sim \ell(\gamma_{i,\,k}),$$
and
$$ \dist_{\Omega}(\gamma_1,\,\gamma_2)\ge c_1 \ell(\gamma_{i,\,k}),$$ 
where the constants are absolute.
%
%
%
These two curves  divide $A$ into three topological rectangles $A_1,\,A_2,\,A_3$, where $\partial A_1\cap \partial A_2=\gamma_1$ and $\partial A_3\cap \partial A_2=\gamma_2$. 
Then the following test function
$$w(x)=\max\left\{0,\, 1- \frac 1 {2c_1 \ell(\gamma_{i,\,k}) } \dist_{\Omega}(x,\,A_1)\right\}$$
implies the desired estimate; observe that
$$|\nabla w|\ls \ell(\gamma_{i,\,k})^{-1}. $$
Then the lemma follows from Theorem~\ref{pq duality} since $|A|\ls \ell(\gamma_{i,\,k})^2$. 
\end{proof}

A fundamental estimate is the $p$-capacity of a spherical annulus in the plane; see e.g.\ \cite[Page 35-37]{HKM2006} for the proof.
\begin{lem}\label{p capacity}
Let $0<r<R<\infty$. Then, for any $x\in \mathbb R^2$ and $p\in (1,\infty)\setminus \{2\}$, we have
\[
{\rm Cap}_p(B(x ,\,r),\, S^1(x ,\,R); B(x ,\,R))=C(p) (R^{\frac {p-2}{p-1}}-r^{\frac {p-2}{p-1}})^{1-p}.
\]
\end{lem}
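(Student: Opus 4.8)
The plan is to prove the identity by the classical radial variational argument, producing matching lower and upper bounds whose common value is the stated expression. After a translation we may assume $x=0$, and since every admissible function takes the value $1$ on the inner disk $B(0,r)$ and $0$ on the outer circle $S^1(0,R)$, only its behavior on the annulus $\{r\le|z|\le R\}$ matters; I will therefore work in polar coordinates $z=(\rho,\theta)$.

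For the lower bound I would take an arbitrary $u\in\Delta(B(0,r),S^1(0,R);B(0,R))$. Using the absolute continuity of $u$ on almost every line (the ACL property of Sobolev functions) together with its continuity up to the two bounding circles, for almost every $\theta$ the map $\rho\mapsto u(\rho,\theta)$ is absolutely continuous with $u(r,\theta)=1$ and $u(R,\theta)=0$, whence $\int_r^R|\partial_\rho u(\rho,\theta)|\,d\rho\ge 1$. Writing $|\partial_\rho u|=(|\partial_\rho u|\rho^{1/p})\,\rho^{-1/p}$ and applying H\"older's inequality with exponents $p$ and $p/(p-1)$ gives
\[
1\le\left(\int_r^R|\partial_\rho u|^p\rho\,d\rho\right)^{1/p}\left(\int_r^R\rho^{-1/(p-1)}\,d\rho\right)^{(p-1)/p}.
\]
Here the hypothesis $p\neq2$ enters when evaluating the weight integral, namely $I:=\int_r^R\rho^{-1/(p-1)}\,d\rho=\frac{p-1}{p-2}\big(R^{(p-2)/(p-1)}-r^{(p-2)/(p-1)}\big)$, which is positive in both ranges $1<p<2$ and $p>2$. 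Rearranging yields $\int_r^R|\partial_\rho u|^p\rho\,d\rho\ge I^{1-p}$ for a.e. $\theta$, and since $|\nabla u|\ge|\partial_\rho u|$, integrating in $\theta$ against $dA=\rho\,d\rho\,d\theta$ produces
\[
\int_{B(0,R)}|\nabla u|^p\,dA\ge 2\pi I^{1-p}.
\]

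For the matching upper bound I would exhibit the radial function that turns H\"older into an equality, namely $u_0(z)=I^{-1}\int_{|z|}^R s^{-1/(p-1)}\,ds$ on the annulus, extended by $1$ on $B(0,r)$. This $u_0$ lies in the admissible class, and a direct computation of $\int|\nabla u_0|^p\,dA$ reproduces exactly $2\pi I^{1-p}$, so the two estimates coincide and
\[
{\rm Cap}_p(B(0,r),S^1(0,R);B(0,R))=2\pi I^{1-p}=C(p)\big(R^{(p-2)/(p-1)}-r^{(p-2)/(p-1)}\big)^{1-p}
\]
with $C(p)=2\pi\big(\tfrac{p-1}{p-2}\big)^{1-p}$, which is the claimed formula.

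The computations are routine; the one step deserving care is the reduction of the lower bound to rays for a general, possibly non-radial, competitor, i.e.\ justifying $\int_r^R|\partial_\rho u(\rho,\theta)|\,d\rho\ge1$ on almost every ray. This rests only on the absolute continuity of $u$ on a.e.\ line and on the prescribed boundary values being attained, guaranteed by membership in $\Delta(\cdot,\cdot;\cdot)$. Once this is in place, H\"older's inequality simultaneously delivers the sharp constant and identifies the extremal $u_0$, so no separate symmetrization argument is needed.
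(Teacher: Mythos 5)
Your proof is correct and is essentially the standard argument that the paper itself defers to (it gives no proof, citing \cite[pp.~35--37]{HKM2006}): reduce to rays via the ACL property in polar coordinates, apply H\"older with the weight $\rho^{-1/p}$ to get the lower bound $2\pi I^{1-p}$, and match it with the explicit radial extremal, yielding $C(p)=2\pi\bigl(\tfrac{p-1}{p-2}\bigr)^{1-p}$. The only cosmetic remark is that for $1<p<2$ both $\tfrac{p-1}{p-2}$ and $R^{(p-2)/(p-1)}-r^{(p-2)/(p-1)}$ are negative, so the formula is cleanest written with absolute values (as the product $I^{1-p}$ with $I>0$, exactly as you computed); this is an imprecision already present in the lemma's statement, not in your argument.
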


\section{Proof of Theorem~\ref{thm:main_1a}}

\begin{proof}[Proof of Theorem~\ref{thm:main_1a}]

Let $\az\subset \Omega$ be an arbitrary curve joining $z_1$ and $z_2$. 
We  extend the hyperbolic geodesic $\gamma$ to the boundary, and denote the two end points on $\partial \Omega$ by $y_1$ and $y_2$. The notation in Lemma~\ref{conformal annulus} will be applied. 

We first show that
\begin{equation}
\label{comparable 1}
d_{{\rm Cap}_p}(z_1,z_2) \sim d_p(z_1,z_2).
\end{equation}
The discussion is divided into two cases.

\noindent {\bf Case 1 : $z_1$ and $z_2$ are in the same or neighboring conformal annulus $ A(y_i,\,2^{-k}) $ for some $i\in\{1,\,2\}$.}  Then by Lemma~\ref{conformal annulus} we have
$$\int_{\gamma} \dist(z,\,\partial \Omega)^{1-p}\, dz \sim  \ell(\gamma[z_1,\,z_2])^{2-p}\sim {\rm Cap}_p(\gamma,\,\partial \Omega;\,\Omega).$$
For an arbitrary curve $\az$, there are two sub-cases: 

\noindent {\bf Case 1.1}: 
If the curve $\az$ joining $z_1,\,z_2$ satisfies
\[
\frac 1 {8}\ell(\gamma[z_1,\,z_2])\le \dist(\az,\,\partial \Omega) \ls \ell(\gamma[z_1,\,z_2]), 
\]
meaning that $\az$ is contained in a  Whitney type set with absolute constant by Lemma~\ref{conformal annulus}, and then the standard capacity estimate Lemma~\ref{p capacity} gives
\begin{eqnarray*}
\int_{\az}\dist(z,\,\partial \Omega)^{1-p}\, dz \sim  \ell(\gamma[z_1,\,z_2])^{2-p} \gs {\rm Cap}_p(\az,\,\partial \Omega;\,\Omega),
\end{eqnarray*}
with the constant depending only on $p$. The other direction follows from a proof similar to the one of \cite[Theorem 11.7]{V1971}. 

\noindent {\bf Case 1.2}: Suppose
the curve $\az$ joining $z_1,\,z_2$ satisfies
\[
\frac 1 {8}\ell(\gamma[z_1,\,z_2])\ge \dist(\az,\,\partial \Omega) 
\] 
 By our assumption and Lemma~\ref{conformal annulus}  there is a subcurve $\az'\subset \az$ such that
\[
\ell(\gamma[z_1,\,z_2]) \ge \dist(\az',\,\partial \Omega)\ge \frac 1 {16}\ell(\gamma[z_1,\,z_2])
\]
and $\diam(\az')\sim \ell(\gamma[z_1,\,z_2])$, where the constant is absolute. Hence the same reasoning as in the  previous paragraph gives
\begin{eqnarray*}
\int_{\az'}\dist(z,\,\partial \Omega)^{1-p}\,dz\sim \ell(\gamma[z_1,\,z_2])^{2-p} \sim {\rm Cap}_p(\az',\,\partial \Omega;\,\Omega)
\end{eqnarray*}
with the constant depending on $p$. 
Since $\az'\subset \az$,
$$\int_{\az'}\dist(z,\,\partial \Omega)^{1-p}\,dz \le \int_{\az}\dist(z,\,\partial \Omega)^{1-p}\,dz,$$
and
$${\rm Cap}_p(\az',\,\partial \Omega;\,\Omega)\le {\rm Cap}_p(\az,\,\partial \Omega;\,\Omega).$$
Since we take infimum among the curves $\gamma$ in the both definitions of $p$-metric and $p$-capacity metric respectively, we conclude the theorem in the special case.

\noindent {\bf Case 2: $z_1$ and $z_2$ are not in the same or neighboring conformal annulus $\varphi(A(y_i,\,2^{-k}))$ for some $i\in\{1,\,2\}$.} We may assume that $z_1$ and $z_2$ are on the boundary of some conformal annuli by our consequence of Case 1.  Then by the subadditivity of $p$-capacity we conclude that
$$d_{{\rm Cap}_p}(z_1,\,z_2) \le  {\rm Cap}_p(\gamma,\,\partial \Omega;\,\Omega) \le \sum_{\gamma_{i,\,k}\subset \gamma[z_1,\,z_2]} {\rm Cap}_p(\gamma_{i,\,k},\,\partial \Omega;\,\Omega)$$
By  Lemma~\ref{conformal annulus} (with the notation there) and Lemma~\ref{p capacity}, on each conformal annulus we have
$$ {\rm Cap}_p(\gamma_{i,\,k},\,\partial \Omega;\,\Omega)\ls {\rm Cap}_p(\gamma_{i,\,k},\,\partial \Omega;\,\mathbb R^2) \ls  \ell(\gamma_{i,\,k})^{2-p}$$
with the constant depending only on $p$. 
By Lemma~\ref{conformal annulus} again we have
\begin{equation}\label{az}
\sum_{\gamma_{i,\,k}\subset \gamma[z_1,\,z_2]}\ell(\gamma_{i,\,k})^{2-p} \ls \sum_{\az_{i,\,k}\subset \az} \int_{\az_{i,\,k}} \dist(z,\,\partial \Omega)^{1-p}\, dz
\le \int_{\az}  \dist(z,\,\partial \Omega)^{1-p}\, dz.
\end{equation}
where $\az_{i,\,k}$ is the part of $\az$ in the corresponding annulus for some arbitrary $\az$ joining $z_1$ and $z_2$. All in all, we have
$$d_{{\rm Cap}_p}(z_1,\,z_2) \ls d_{p}(z_1,\,z_2) $$
by the arbitrariness of $\az$, where the constant depends only on $p$. 

For the other direction, let $u\in \Delta(\gamma ,\, \partial \Omega; \Omega)$ be absolutely continuous along almost every line segment. 
Then by Lemma~\ref{conformal annulus} and Lemma~\ref{annulus capacity}, we have
\begin{eqnarray*}
d_{p}(z_1,\,z_2) &\le& \int_{\gamma} \dist(z,\,\partial \Omega)^{1-p}\, dz \\
&\lesssim& \sum_{\gamma_{i,\,k} \subset \gamma[z_1,\,z_2]} \ell(\gamma_{i,\,k})^{2-p} \\
&\lesssim& \sum_{i,\,k}\int_{A(y_i,\,k)}|\nabla u|^p\, dx\le  \int_{\Omega} |\nabla u|^p\, dx. 
\end{eqnarray*}
Then  by taking the infimum over $u\in \Delta(\gamma ,\, \partial \Omega; \Omega)$ we obtain the other direction. 
Thus
\eqref{comparable 1} follows
with the constant depending only on $p$. 

The rest part of the theorem follows analogously according to the calculation above: For example, by choosing $\az$ as $\gamma$ in \eqref{az} one concludes via the calculation above that
$${\rm Cap}_p(\gamma,\,\partial \Omega;\,\Omega) \ls \int_{\gamma} \dist(z,\,\partial \Omega)^{1-p}\, dz\ls \int_{\Omega} |\nabla u|^p\, dx,$$
and then obtains 
$${\rm Cap}_p(\gamma,\,\partial \Omega;\,\Omega)\ls d_{{\rm Cap}_p}(z_1,\,z_2)$$
by the arbitrariness of $u\in \Delta(\gamma ,\, \partial \Omega; \Omega)$. The other inequalities in Theorem~\ref{thm:main_1a} follow similarly. 
\end{proof}

\section{The proof of  Corollary~\ref{cor}}

\begin{proof}[Proof of Corollary~\ref{cor}]
Let us first consider the case $z_1,\,z_2\in \partial \Omega$. We claim that for these two points, there exists a curve $\gamma\subset \Omega$ such that
$$\int_{\gamma}\dist(z,\partial \Omega)^{\frac{1}{1-q}}\,\d s(z) \ls |z_1-z_2|^{\frac{q-2}{q-1}}.$$

Let $\gamma$ be the hyperbolic geodesic  joining $z_1$ and $z_2$. 
Notice that $z_1,\,z_2$ divide $\partial \Omega$ into two subarcs; namely $\Omega=\Gamma_1\cup \Gamma_2$. Then each $\Gamma_i$ with the hyperbolic geodesic $\gamma$ gives us a Jordan domain. 
Moreover, by applying an approximation argument, we generalize Theorem~\ref{pq duality} in the case where $\gamma_1=\Gamma_1$, $\gamma_3=\Gamma_2$, and $\gamma_2$ and $\gamma_4$ there are just points $z_1$ and $z_2$, respectively. 

Since $1-p = 1/(1-q)$, we have, by Theorem \ref{thm:main_1a}, (generalized) Theorem~\ref{pq duality} and the subadditivity of $p$-capacity, 
\begin{align}
\int_{\gamma}\dist(z,\partial \Omega)^{\frac{1}{1-q}}\,\d s(z) 
&= \int_{\gamma}\dist(z,\partial \Omega)^{1-p}\,\d s(z) \ls d_{{\rm Cap}_p}(z_1,z_2) \nonumber\\
&\sim  {\rm Cap}_p (\gamma,\,\partial \Omega,\; \Omega) \nonumber\\
&\lesssim {\rm Cap}_p(\gamma ,\,\Gamma_1;\, \Omega)+{\rm Cap}_p(\gamma ,\,\Gamma_2;\, \Omega)\nonumber\\
&\lesssim \left({\rm Cap}_q( z_1 ,\, z_2 ;\,\Omega)\right)^{-\frac 1 {q-1}} \label{cap}
\end{align}

For $q>2$, Morrey's inequality \cite[4.5.3]{EG1992} yields
\[
{\rm Cap}_q( z_1,\,z_2;\,\mathbb R^2)\sim |z_1-z_2|^{2-q}.
\]
Since $\Omega$ is a  $W^{1,\,q}$-extension domain, then
$${\rm Cap}_q( z_1,\,z_2;\,\Omega)\sim{\rm Cap}_q( z_1,\,z_2;\,\mathbb R^2)\sim |z_1-z_2|^{2-q},$$
with the constant depending only on the extension operator. Hence, by \eqref{cap},
\[
\int_{\gamma}\dist(z,\partial \Omega)^{\frac 1 {q-1}}\,\d s(z)  \lesssim |z_1-z_2|^{\frac{q-2}{q-1}}. 
\]
Thus the claim follows.

Now for a pair of points $x_1,\,x_2\in \Omega$, we have three cases. If 
$$2|x_1-x_2| \le \max\{\dist(x_1,\,\partial\Omega),\,\dist(x_2,\,\partial \Omega)\},$$
then the line segment joining $x_1,\,x_2$ satisfies  the desired curve condition.
On the other hand, if 
\begin{equation}\label{assum 6}
2|x_1-x_2| \ge \max\{\dist(x_1,\,\partial\Omega),\,\dist(x_2,\,\partial \Omega)\},
\end{equation}
let $z_1,\,z_2\in \partial \Omega$ such that 
$$\dist(x_1,\,z_1)=\dist(x_1,\,\partial \Omega), \qquad \dist(x_2,\,z_2)=\dist(x_2,\,\partial \Omega). $$
Then by letting $\az=[x_1,\,z_1]\cup \gamma\cup [x_2,\,z_2]$, where $[x_1,\,z_1],\,[x_2,\,z_2]$ denote the line segment joining $x_1,\,z_1$ and $x_2,\,z_2$, respectively, and $\gamma$ is the hyperbolic geodesic joining $z_1,\,z_2$. Then by our claim above with the assumption \eqref{assum 6}, we conclude that
\begin{align*}
 \int_{\az}\dist(z,\partial \Omega)^{\frac 1 {q-1}}\,\d s(z)
 \le & \left( \int_{[x_1,\,z_1]} + \int_\gamma +\int_{[x_2,\,z_2]}\right) \dist(z,\partial \Omega)^{\frac 1 {q-1}}\,\d s(z) \\
\ls &|x_1-z_1|^{\frac{q-2}{q-1}}+|z_1-z_2|^{\frac{q-2}{q-1}}+|x_2-z_2|^{\frac{q-2}{q-1}}\\
\ls & |x_1-x_2|^{\frac{q-2}{q-1}},
\end{align*}
where we used the fact that the triangle inequality and the assumption \eqref{assum 6} yield
$$|z_1-z_2|\le |x_1-z_1|+ |x_1-x_2|+|x_2-z_2|\le 5 |x_1-x_2|. $$ 
\end{proof}

\begin{rem}
When $p=q=2$, one may apply  Theorem~\ref{pq duality} to a Jordan $W^{1,\,2}$-extension domain to similarly show that, for any $z_1,\,z_2\in \Omega$, the hyperbolic curve $\Gamma$ joining them satisfies
$$\int_{\Gamma}\dist(z,\,\Omega)^{-1}\, ds(z) \left(\log\left(1+\frac{|z_1-z_2|}{\min\{\dist(z_1,\,\partial \Omega),\,\dist(z_2,\,\partial \Omega)\}}\right)\right)^{-1}\lesssim 1,$$
provided $z_1,\,z_2$ are relatively far (compared with their distances to the boundary). This is the Gehring-Osgood characterization of quasidisks \cite{GO1980}, and it is proven that a Jordan domain in the plane is a  $W^{1,\,2}$-extension domain if and only if the domain is a quasidisk; see \cite{golavo1979,gore1990,govo1981,jo1981}. Hence one can also prove the necessity of a Jordan $W^{1,\,2}$-extension domain without using any test function. 
Moreover, this indicates the relations between Gehring-Osgood condition and  \eqref{curve condition} via Theorem~\ref{pq duality}, i.e.\ the left-hand side is comparable to some capacity ($p$-capacity for \eqref{curve condition} and $2$-capacity for Gehring-Osgood condition) inside the domain, and the right-hand side is comparable to the reciprocal of its dual capacity in the Euclidean space with the correct power.
\end{rem}

\end{document}